\newtheorem*{rep@theorem}{\rep@title}
\newcommand{\newreptheorem}[2]{%
\newenvironment{rep#1}[1]{%
 \def\rep@title{#2 \ref{##1}}%
 \begin{rep@theorem}}%
 {\end{rep@theorem}}}
\theoremstyle{plain}
\newtheorem{theorem}{Theorem}
\theoremstyle{definition}
\newtheorem*{definition}{Definition}
\newtheorem{lemma}{Lemma}[section]
\newtheorem{proposition}{Proposition}[section]
\newtheorem{conjecture}{Conjecture}[section]
\newcommand{\Qd}[1]{\mathbb{Q}(\sqrt{#1})}
\newcommand{\Z}{\mathbb{Z}}
\newcommand{\Q}{\mathbb{Q}}
\newcommand{\A}{\alpha}
\newcommand{\e}{\equiv}
\newcommand{\OK}{\mathcal{O}_{K}}
\newcommand{\lcm}{\operatorname{lcm}}
\newcommand{\ord}{\operatorname{ord}}
\DeclareMathOperator{\rad}{rad}
\title{Wall's Conjecture and the ABC Conjecture}
\author{George Grell, Wayne Peng}
\date{\today}
\begin{document}

\maketitle	
\begin{abstract}
%We investigate a connection between the $abc$ conjecture of Masser-Oesterl\'{e}-Szpiro for number fields and the number of occurrences of Fibonacci-Wieferich primes. Assuming this conjecture we show that there are infinitely many non-Fibonacci-Wieferich primes. We also provide a new heuristic for the number of such primes beneath a certain value.
We show that the $abc$ conjecture of Masser-Oesterl\'{e}-Szpiro for number fields implies that there are infinitely many non-Fibonacci-Wieferich primes. We also provide a new heuristic for the number of such primes beneath a certain value.
\end{abstract}
\section{Wall's conjecture}
The Fibonacci sequence $\{F_n\}$, our main character in this story, is a sequence generated by the relation
\[
F_n=F_{n-1}+F_{n-2}\qquad\forall n\geq 2
\]
with initial values $F_0=0$ and $F_1=1$. What happens if we consider the Fibonacci sequence modulo an integer? Let us compute $F_n\mod 3$. It is
\[
0,\ 1,\ 1,\ 2,\ 0,\ 2,\ 2,\ 1,\ 0,\ 1,\ldots.
\]
What about modulo $4$ or modulo $5$? What about modulo any integer? It is not hard to observe that the sequences always form a cycle. We are curious about the length of these cycles. We use $\pi(m)$ to represent the period of the Fibonacci sequence modulo $m$. In 1960, D.D. Wall\cite{Wall1960} investigated this function and gave the following beautiful theorems.
\begin{theorem}[Wall]
The function $\pi$ is a multiplicative function, i.e. if $m$ is a positive integer, and $m=p_1^{e_1}p_2^{e_2}\cdots p_r^{e_r}$ where $p_i$ are distinct primes, $e_i$ and $r$ are positive integers, then
\[
\pi(m)=\pi(p_1^{e_1})\pi(p_2^{e_2})\cdots \pi(p_r^{e_r}).
\]
\end{theorem}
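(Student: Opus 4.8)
The plan is to reinterpret $\pi(m)$ as the multiplicative order of a single fixed matrix, and then transport the problem through the Chinese Remainder Theorem, where the multiplicativity of arithmetic quantities is most transparent. First I would introduce the companion matrix of the recurrence,
\[
A=\begin{pmatrix}1&1\\1&0\end{pmatrix},
\]
and verify by induction that $A^{n}=\begin{pmatrix}F_{n+1}&F_{n}\\F_{n}&F_{n-1}\end{pmatrix}$. Reducing modulo $m$, the state $(F_{n},F_{n-1})$ is recovered from $A^{n}\bmod m$, and the sequence first returns to its initial values exactly when $A^{n}\equiv I\pmod m$. Hence $\pi(m)=\ord_{m}(A)$, the order of $A$ in $\mathrm{GL}_2(\Z/m\Z)$. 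This reduction is the crux of the setup: it converts ``period of a sequence'' into ``order of a group element,'' a quantity I can follow coordinate by coordinate.

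Next I would exploit that for $m=p_1^{e_1}\cdots p_r^{e_r}$ the ring isomorphism $\Z/m\Z\cong\prod_i \Z/p_i^{e_i}\Z$ induces a group isomorphism $\mathrm{GL}_2(\Z/m\Z)\cong\prod_i\mathrm{GL}_2(\Z/p_i^{e_i}\Z)$ under which $A\bmod m$ corresponds to the tuple $(A\bmod p_i^{e_i})_i$. A tuple is trivial precisely when every coordinate is trivial, so $\ord_m(A)$ is determined by the coordinate orders $\pi(p_i^{e_i})$. Making this compatibility precise, together with an induction on the number of prime factors, reduces the whole statement to the two-factor case: it suffices to prove $\pi(ab)=\pi(a)\pi(b)$ whenever $a$ and $b$ are coprime.

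The final and hardest step is to show that these coordinatewise orders assemble into the full product $\pi(a)\pi(b)$ rather than into a proper divisor of it. Concretely, for coprime $a$ and $b$ I would establish the two divisibilities $\pi(ab)\mid\pi(a)\pi(b)$ and $\pi(a)\pi(b)\mid\pi(ab)$; the first is immediate, since $A^{\pi(a)\pi(b)}\equiv I$ modulo each of $a$ and $b$ and hence modulo $ab$, so all of the content lies in the second. I expect the main obstacle to sit exactly here: one must argue that the period contributed by $a$ and the period contributed by $b$ make genuinely independent contributions modulo $ab$, so that no collapse occurs and the exponents combine multiplicatively. My intended line of attack is to work prime by prime through the divisors of $\pi(a)\pi(b)$, pinning down the exact power with which each divides $\ord_{ab}(A)$ via the coordinate decomposition above and a careful analysis of the prime-power periods; controlling these exponents so that their product survives intact is where the real difficulty — and the subtlety of the statement — is concentrated.
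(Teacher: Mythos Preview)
First, the paper does not actually prove this theorem; it is quoted from Wall's 1960 paper and stated without argument, so there is nothing in the paper to compare your proof against.

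More importantly, the statement as printed is false, and your proposal founders exactly where it must. The Pisano period satisfies $\pi(ab)=\lcm(\pi(a),\pi(b))$ for coprime $a,b$, not $\pi(ab)=\pi(a)\pi(b)$. For instance $\pi(3)=8$ and $\pi(7)=16$, but $\pi(21)=16=\lcm(8,16)$, not $128$. Your matrix--CRT setup is correct and in fact proves this $\lcm$ formula immediately: the order of a tuple $(g_1,\dots,g_r)$ in a direct product of groups is $\lcm_i\ord(g_i)$, so
\[
\pi(m)=\ord_m(A)=\lcm_i\ord_{p_i^{e_i}}(A)=\lcm_i\pi(p_i^{e_i}).
\]

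The divisibility you flag as the ``hardest step,'' namely $\pi(a)\pi(b)\mid\pi(ab)$, is therefore false in general, and no prime-by-prime exponent analysis can rescue it: the ``collapse'' you worry about genuinely occurs whenever $\pi(a)$ and $\pi(b)$ share a common factor, which is the typical situation (indeed $\pi(m)$ is even for every $m\ge 3$). What Wall actually proved, and what the rest of the paper uses, is the $\lcm$ identity; the product formula in the displayed equation is a misstatement.
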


In order to understand a multiplicative function, it is sufficient to study the value of the function at prime powers. This motivation gives us the following theorem.
\begin{theorem}[Wall]\label{thm:wall}
For all prime $p$, we have the following two cases:
\begin{enumerate}
\item If $p\e \pm 1\mod 10$, then $\pi(p)$ divides $p-1$.
\item If $p\e \pm 3\mod 10$, then $\pi(p)$ divides $2(p+1)$.
\end{enumerate}
Moreover, if we have $\pi(p^e)\neq \pi(p^{e+1})$, then
\[
\pi(p^{e+i})=p^i\pi(p^e)\qquad\forall i\geq 1.
\]
\end{theorem}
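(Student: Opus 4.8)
The plan is to phrase everything in terms of the companion matrix $Q=\begin{pmatrix}1&1\\1&0\end{pmatrix}$, which satisfies $Q^{n}=\begin{pmatrix}F_{n+1}&F_{n}\\F_{n}&F_{n-1}\end{pmatrix}$. Since the state vector $(F_{n+1},F_{n})^{\mathsf T}$ equals $Q^{n}(1,0)^{\mathsf T}$ and determines the sequence from index $n$ onward, the period $\pi(m)$ is exactly the multiplicative order of $Q$ in $\mathrm{GL}_{2}(\Z/m\Z)$: if $Q^{n}$ fixes $(1,0)^{\mathsf T}$ it also fixes $Q(1,0)^{\mathsf T}=(1,1)^{\mathsf T}$, and those two vectors form a basis of $(\Z/m\Z)^{2}$, forcing $Q^{n}\equiv I$. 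So the whole statement reduces to bounding the order of $Q$ modulo $p$ and tracking how it grows modulo $p^{e}$.

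For a prime $p\neq 2,5$ the characteristic polynomial $x^{2}-x-1$ of $Q$ has discriminant $5\not\equiv 0\pmod p$, so over $\mathbb{F}_p$ or $\mathbb{F}_{p^{2}}$ it has two distinct nonzero roots $\A,\B$ (the reductions of the golden ratio and its conjugate) with $\A\B=-1$ and $\A+\B=1$. Then $Q$ is diagonalizable over that field, so $Q^{n}\equiv I$ if and only if $\A^{n}=\B^{n}=1$; that is, $\pi(p)=\lcm(\ord\A,\ord\B)$. By quadratic reciprocity $\left(\tfrac 5p\right)=\left(\tfrac p5\right)$, which equals $+1$ exactly when $p\equiv\pm1\pmod{10}$. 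In that case $x^{2}-x-1$ splits over $\mathbb{F}_p$, so $\A,\B$ lie in the cyclic group $\mathbb{F}_p^{\times}$ of order $p-1$; hence $\ord\A$ and $\ord\B$ both divide $p-1$, and so does their lcm, giving case (1). If instead $p\equiv\pm3\pmod{10}$, then $\left(\tfrac 5p\right)=-1$, so $\A\in\mathbb{F}_{p^{2}}\setminus\mathbb{F}_p$ and its Galois conjugate is $\B=\A^{p}$; then $\A^{p+1}=\A\cdot\A^{p}=\A\B=-1$, whence $\A^{2(p+1)}=1$, and likewise $\B^{2(p+1)}=1$, so $\ord\A,\ord\B\mid 2(p+1)$ and $\pi(p)\mid 2(p+1)$, which is case (2). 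The primes $2$ and $5$ fall under neither congruence class, so they need no treatment here.

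For the ``moreover'' clause, fix $e\ge 1$, put $k=\pi(p^{e})$, and write $Q^{k}=I+p^{e}A$ with $A$ an integer matrix. One always has $\pi(p^{e})\mid\pi(p^{e+1})$ (reduce mod $p^{e}$) and $\pi(p^{e+1})\mid pk$ (since $(I+p^{e}A)^{p}\equiv I\pmod{p^{e+1}}$), so $\pi(p^{e+1})\in\{k,pk\}$, and $\pi(p^{e+1})=k$ happens precisely when $A\equiv 0\pmod p$. Thus the hypothesis $\pi(p^{e})\neq\pi(p^{e+1})$ is equivalent to $A\not\equiv 0\pmod p$, and then $\pi(p^{e+1})=pk$. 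Expanding $Q^{pk}=(I+p^{e}A)^{p}=I+p^{e+1}A+\sum_{j\ge 2}\binom pj p^{je}A^{j}$ and observing that for odd $p$ and $e\ge 1$ every term with $j\ge 2$ is divisible by $p^{e+2}$, we get $Q^{pk}\equiv I+p^{e+1}A\pmod{p^{e+2}}$ with the same $A\not\equiv 0\pmod p$; hence the hypothesis is inherited at level $e+1$, and an induction yields $\pi(p^{e+i})=p^{i}k=p^{i}\pi(p^{e})$ for all $i\ge 1$. I expect the $p$-adic bookkeeping in this last part to be the only delicate point, along with the exceptional prime $p=2$: for $e\ge 2$ the inequality $2e\ge e+2$ makes the expansion go through verbatim, and the single remaining case $e=1$ is checked directly from $Q^{3}=I+2Q$ (so $\pi(2)=3\neq 6=\pi(4)$, and $Q(I+Q)\equiv I\not\equiv 0\pmod 2$ feeds the induction).
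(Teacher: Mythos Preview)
Your argument is correct and is in fact the standard matrix/eigenvalue proof of Wall's theorem. Note, however, that the paper does \emph{not} prove this statement: Theorem~\ref{thm:wall} is quoted from Wall's 1960 paper and cited without proof, so there is no ``paper's own proof'' to compare against.

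That said, the machinery you chose dovetails nicely with what the paper does develop later. Where you diagonalize the companion matrix $Q$ over $\mathbb{F}_p$ or $\mathbb{F}_{p^2}$ and read off $\pi(p)=\lcm(\ord\A,\ord\B)$, the paper instead works directly in $\Z[\A]$ and records the same identity $\pi(p^{e})=\lcm(\ord_{p^{e}}\A,\ord_{p^{e}}\bar\A)$ (citing Kla\v{s}ka) for its heuristic section, and proves the companion fact that $l(p^{e})$ is the order of $\A\bar\A^{-1}$ in $(\Z[\A]/p^{e}\Z[\A])^{\times}$. Your matrix viewpoint and the paper's $\Z[\A]$ viewpoint are equivalent: $Q$ acts on $\Z[\A]$ as multiplication by $\A$, so diagonalizing $Q$ is the same as passing to the ring where $\A$ lives. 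Your binomial-expansion argument for the lifting step is likewise the matrix analogue of the $(1+kp)$ computation the paper uses in the proof of its Main Lemma.

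One cosmetic remark: your handling of $p=2$ is correct but compressed. It might be clearer to say explicitly that the only failure of the inequality $je+v_p\binom{p}{j}\ge e+2$ is at $(p,e,j)=(2,1,2)$, and that in this one case the extra term $4A^{2}$ changes $A$ to $A+A^{2}$ modulo $2$, which for $A=Q$ still reduces to $I\not\equiv 0$; from $e\ge 2$ onward the generic argument applies verbatim.
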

This seems to be the end of the whole story, but like all good stories, it never ends so easily. At the end of his paper, Wall offered the conjecture that $\pi(p)\neq\pi(p^2)$ for all primes $p$. This is called Wall's conjecture, and has remained unsolved for over 50 years. A small step to answer this question is to show that infinitely many primes satisfy Wall's conjecture, and that is the goal of this paper.

\section{Preliminary}
We are curious about the position where zero first occurs in the sequence $\{F_n\mod m\}$. Let $l:=l(m)$ be the smallest integer such that $F_{l}\e 0\mod m$. The identity $F_{n+k}=F_nF_{k+1}+F_{n-1}F_{k}$ shows that
\[
F_{2l}\e F_{l}F_{l+1}+F_{l-1}F_l\e 0\mod m.
\]
Using induction, one can easily show the following statement:
\begin{lemma}[Wall]\label{lem:1}
If $F_n\e 0\mod m$, then $l(m)$ divides $n$.
\end{lemma}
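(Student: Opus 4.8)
The plan is to run the division algorithm against the addition identity $F_{n+k}=F_nF_{k+1}+F_{n-1}F_k$ already recorded above. Set $l:=l(m)$ and, given $n$ with $F_n\e 0\mod m$, write $n=ql+r$ with $0\leq r<l$; the whole game is to force $r=0$.

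First I would show by induction on $q\geq 0$ that $F_{ql}\e 0\mod m$. The cases $q=0$ (since $F_0=0$) and $q=1$ (definition of $l$) are immediate, and the inductive step is precisely the displayed computation before the lemma: $F_{(q+1)l}=F_{ql+l}=F_{ql}F_{l+1}+F_{ql-1}F_l$, where the first summand is divisible by $m$ by the inductive hypothesis and the second because $F_l\e 0\mod m$. Next I would feed $n=ql+r$ into the same identity: $F_n=F_{ql}F_{r+1}+F_{ql-1}F_r\e F_{ql-1}F_r\mod m$, so the hypothesis $F_n\e 0\mod m$ gives $F_{ql-1}F_r\e 0\mod m$.

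To finish I would argue that $F_{ql-1}$ is a unit modulo $m$. Consecutive Fibonacci numbers are coprime — $\gcd(F_k,F_{k-1})=1$ for all $k$, by a one-line induction from $F_k=F_{k-1}+F_{k-2}$ — so if a prime $p\mid m$ divided $F_{ql-1}$ it would also divide $F_{ql}$ (as $m\mid F_{ql}$), contradicting $\gcd(F_{ql},F_{ql-1})=1$. Hence $\gcd(F_{ql-1},m)=1$, we may cancel it, and $F_r\e 0\mod m$ with $0\leq r<l$. Since $l$ is the least \emph{positive} index with $F_l\e 0\mod m$, this forces $r=0$, i.e. $l\mid n$.

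There is no deep obstacle here; the points that need care are (i) the cancellation step, where one must justify $\gcd(F_{ql-1},m)=1$ rather than merely $\gcd(F_{l-1},m)=1$, via the coprimality of consecutive Fibonacci numbers, and (ii) the bookkeeping in Step 1, since the addition identity only directly controls $F_{l+l}$ and an induction is genuinely required to reach $F_{ql}$ for all $q$. One should also be explicit that $l(m)$ denotes the smallest positive such index, so that obtaining $F_r\e 0\mod m$ with $0<r<l$ is a bona fide contradiction.
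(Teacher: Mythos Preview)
Your argument is correct and follows essentially the same route the paper sketches: the addition identity $F_{n+k}=F_nF_{k+1}+F_{n-1}F_k$ together with induction. The paper only displays the $F_{2l}\e 0\pmod m$ step and then says ``using induction, one can easily show\ldots''; you have supplied precisely the details that sketch omits --- the division-algorithm reduction $n=ql+r$, the coprimality of consecutive Fibonacci numbers to cancel $F_{ql-1}$, and the minimality of $l$ to force $r=0$.
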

This implies that $l(p)\leq l(p^2)$. Now, let us look closely at $F_{l(p^e)}$. It is well-known that the closed form of $F_n$ is
\[
F_n=\dfrac{\A^n-\bar{\A}^n}{\sqrt{5}}
\]
where $\A=(1+\sqrt{5})/2$ and $\bar{\A}=(1-\sqrt{5})/2$. Thus, the ring $\Z$ is slightly small for our discussion. Instead we will work in the larger ring $\Z[\A]$. The reason for this choice is that $\Z[\A]$ is the ring of integers of $\Qd{5}$. Thus, the modulus $p^e$ is considered in the ring $\Z[\A]/p^e\Z[\A]$. The disadvantage of this ring is that $\Z[\A]/p^e\Z[\A]$ has zero divisors, so some elements have no inverse. However, we should note that $\A\bar{\A}=-1$, so $\A$ and $\bar{\A}$ are invertible in this ring. Also, if we choose $p\neq 2$ or $5$, then $2$ and $\sqrt{5}$ are both invertible. The assumption $p\neq 2$ or $5$ is not a big deal since it can be checked that both 2 and 5 satisfy Wall's conjecture

Denote $l(p^e)$ by $l$. We will show that $l$ is the order of $(\A\bar{\A}^{-1})$ in the multiplicative group $(\Z[\A]/p^e\Z[\A])^{\times}$. Let $n_0$ be the order of $(\A\bar{\A}^{-1})$, so by definition
\[
(\A\bar{\A}^{-1})^{n_0}\e 1\mod p^e.
\]
Hence, this implies 
\[
\A^{n_0}-\bar{\A}^{n_0}\e 0\mod p^e,
\]
and it follows that $F_{n_0}\e 0 \mod p^e$. By Lemma~\ref{lem:1}, $l$ divides $n_0$. Conversely, the definition of $l$ shows that
\[
\dfrac{\A^l-\bar{\A}^l}{\sqrt{5}}\e 0\mod p^e.
\]
Since $\sqrt{5}^{-1}$ is not a zero divisor, it follows that $(\A\bar{\A}^{-1})^l\e 1\mod p^e$. By Lagrange's Theorem, $l$ must be divisible by the order $n_0$, which implies our desired result, $l=n_0$, or formally
\begin{lemma}\label{lem:2}
For $p\neq 2$ or $5$, the integer $l(p^e)$ is the order of $(\A\bar{\A}^{-1})$ in the multiplicative group $(\Z[\A]/p^e\Z[\A])^\times$.
\end{lemma}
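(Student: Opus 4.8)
The plan is to verify the two divisibility relations $l \mid n_0$ and $n_0 \mid l$, where I abbreviate $l := l(p^e)$ and write $n_0 := \ord(\A\bar{\A}^{-1})$ in $(\Z[\A]/p^e\Z[\A])^\times$; this order is well defined since $\A\bar{\A} = -1$ makes $\A$, $\bar{\A}$, and hence $\A\bar{\A}^{-1}$ units modulo $p^e$, and for $p\neq 2,5$ the element $\sqrt{5}$ is a unit as well (its norm $-5$ is invertible mod $p^e$). I will also use the fact that, because $\{1,\A\}$ is a $\Z$-basis of $\Z[\A]$, one has $p^e\Z[\A]\cap\Z = p^e\Z$; consequently, for a rational integer $N$ the statements $N\e 0 \mod p^e$ in $\Z$ and $N\e 0 \mod p^e\Z[\A]$ are equivalent, and I identify these two congruences freely below.

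First I would show $l\mid n_0$. From $(\A\bar{\A}^{-1})^{n_0}\e 1 \mod p^e$, multiplying by the unit $\bar{\A}^{n_0}$ gives $\A^{n_0}\e\bar{\A}^{n_0}\mod p^e$, so $\A^{n_0}-\bar{\A}^{n_0}\e 0\mod p^e$, and multiplying by the unit $(\sqrt{5})^{-1}$ gives $F_{n_0}\e 0 \mod p^e\Z[\A]$. Since $F_{n_0}$ is a rational integer, the basis remark upgrades this to $F_{n_0}\e 0\mod p^e$ in $\Z$, and Lemma~\ref{lem:1} then yields $l\mid n_0$. Conversely, to show $n_0\mid l$, I start from the defining property $F_l\e 0\mod p^e$; writing $F_l=(\A^l-\bar{\A}^l)/\sqrt{5}$ and clearing the unit $\sqrt{5}$ gives $\A^l\e\bar{\A}^l\mod p^e$, and multiplying by the unit $\bar{\A}^{-l}$ gives $(\A\bar{\A}^{-1})^l\e 1\mod p^e$; by minimality of the order (equivalently, Lagrange's theorem inside the cyclic group generated by $\A\bar{\A}^{-1}$), $n_0\mid l$. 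Since $l$ and $n_0$ are positive, the two divisibilities force $l=n_0$.

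I do not expect a genuine obstacle here: the proof is just a dictionary translating the additive description of $l$ (first vanishing index of $\{F_n\}$) into the multiplicative order of $\A\bar{\A}^{-1}$, legitimized by the invertibility of $\sqrt{5}$ and $\bar{\A}$. The one place that deserves a careful line is the passage between a congruence in $\Z[\A]$ and the corresponding congruence in $\Z$, which is needed precisely because Lemma~\ref{lem:1} is stated over $\Z$; everything else is a routine manipulation of units.
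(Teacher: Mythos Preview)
Your proof is correct and follows essentially the same two-divisibility argument as the paper: show $l\mid n_0$ via Lemma~\ref{lem:1} after converting $(\A\bar{\A}^{-1})^{n_0}\e 1$ into $F_{n_0}\e 0$, then show $n_0\mid l$ by running the conversion backwards and invoking minimality of the order. If anything, you are slightly more scrupulous than the paper in justifying the passage between congruences in $\Z$ and in $\Z[\A]$ via the basis $\{1,\A\}$.
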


These observations lead us to the following key lemma.
\begin{lemma}[Main Lemma]\label{mainlemma}
We assume $p\neq 2$ or $5$. If $p$ is a prime dividing $F_n$, and $p^2$ does not divide $F_n$, then $\pi(p)\neq\pi(p^2)$.
\end{lemma}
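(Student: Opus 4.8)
The plan is to transfer the assertion from the period function $\pi$ to the function $l$ studied in Section~2, settle it there, and then push it back to $\pi$ using Wall's Theorem~\ref{thm:wall}. First, the hypotheses force $l(p)\neq l(p^2)$. From $p\mid F_n$ and Lemma~\ref{lem:1} we get $l(p)\mid n$. If we had $l(p)=l(p^2)$, then $F_{l(p)}\equiv 0\pmod{p^2}$, and since $l(p)\mid n$, the classical divisibility $F_a\mid F_b$ (valid whenever $a\mid b$) would give $p^2\mid F_{l(p)}\mid F_n$, contradicting $p^2\nmid F_n$. Hence $l(p)\neq l(p^2)$; moreover $F_{l(p^2)}\equiv 0\pmod p$, so Lemma~\ref{lem:1} also gives $l(p)\mid l(p^2)$, and thus $l(p)$ is a proper divisor of $l(p^2)$.

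Next I would determine how large the jump is. By Lemma~\ref{lem:2}, $l(p^e)$ is the multiplicative order of the unit $\beta:=\A\bar{\A}^{-1}$ in $(\Z[\A]/p^e\Z[\A])^\times$. Writing $\beta^{l(p)}=1+p\g$ with $\g\in\Z[\A]$ and expanding $(1+p\g)^p$, every term past the leading $1$ is divisible by $p^2$ — the linear term equals $p^2\g$, and each remaining term carries a factor $p^k$ with $k\ge 2$ — so $\beta^{p\,l(p)}\equiv 1\pmod{p^2}$. Therefore $l(p^2)$ divides $p\,l(p)$ and is a multiple of $l(p)$, leaving only $l(p^2)\in\{l(p),\,p\,l(p)\}$; combined with the previous paragraph, $l(p^2)=p\,l(p)$, and in particular $p\mid l(p^2)$.

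Finally, suppose toward a contradiction that $\pi(p)=\pi(p^2)$. Since the Fibonacci sequence has period $\pi(p^2)$ modulo $p^2$ and $F_0=0$, we have $F_{\pi(p^2)}\equiv 0\pmod{p^2}$, so Lemma~\ref{lem:1} gives $l(p^2)\mid\pi(p^2)=\pi(p)$, whence $p\mid\pi(p)$. But Wall's Theorem~\ref{thm:wall} forces $\pi(p)$ to divide $p-1$ or $2(p+1)$, and since $p$ is odd both of these are coprime to $p$; hence $p\nmid\pi(p)$, a contradiction. Therefore $\pi(p)\neq\pi(p^2)$.

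The step that needs the most care is the order lifting in the second paragraph: it takes place in $\Z[\A]/p^e\Z[\A]$, a ring with zero divisors, so one must use that $\beta$ is genuinely a unit (which holds because $\A\bar{\A}=-1$) and that the binomial estimate survives in this setting; for $p$ odd the standard argument — equivalently, the analogue for $l$ of the last assertion of Theorem~\ref{thm:wall} — goes through unchanged. Everything else is a direct consequence of Lemmas~\ref{lem:1} and~\ref{lem:2} together with Wall's Theorem~\ref{thm:wall}.
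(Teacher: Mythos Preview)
Your proof is correct and follows essentially the same route as the paper: both establish $l(p^2)=p\,l(p)$ by analyzing the order of $\A\bar{\A}^{-1}$ modulo $p^2$, and then invoke Theorem~\ref{thm:wall} to see that $\pi(p)$ is coprime to $p$, so that $l(p^2)\nmid\pi(p)$. The only cosmetic difference is the ordering: the paper argues directly that $\beta^{l(p)}\equiv 1+kp$ with $k\not\equiv 0\pmod p$ (so $1+kp$ has exact order $p$), whereas you first bound $l(p^2)\mid p\,l(p)$ via the binomial expansion and then exclude $l(p^2)=l(p)$ separately using the classical divisibility $F_a\mid F_b$ --- a fact not stated in the paper but standard, and in any case avoidable here by appealing to Lemma~\ref{lem:2} instead.
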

\begin{proof}
Denote $l(p)$ by $l$. By Lemma~\ref{lem:1} and Lemma~\ref{lem:2}, our assumption implies
\[
(\A\bar{\A}^{-1})^{l}\e 1+kp\mod p^2
\]
for some $k\not\e 0\mod p$. The order of $(1+kp)$ in the multiplicative group $\Z[\A]/p^2\Z[\A]$ is $p$, so $l(p^2)=pl$. Since $l(p)$ divides $\pi(p)$ and $\pi(p)$ divides $p^2-1$, which is relatively prime to $p$, $l(p^2)$ doesn't divide $\pi(p)$, which implies $\pi(p)\neq \pi(p^2)$  by Theorem~\ref{thm:wall}.
\end{proof}
The implication of the main lemma is that if we can show that the squarefree part of $F_n$,
\[
\prod_{p\| F_n}p,
\]
goes to infinity where $p\| N$ if and only if $p|F_n$ but $p^2\not | F_n$, we will have shown that there are infinitely many primes $p$ such that Wall's conjecture holds. Unfortunately, in this paper we are unable to show this without another important conjecture, the $abc$ conjecture.
\section{$abc$ conjecture}
In the following context, we only consider the number field $\Qd{5}$. The ring of integers of $\Qd{5}$ is $\Z[\A]=\{x+y\A\mid x,y\in\Z\}$ which is a generalization of integers $\Z$. One of the important properties $\Z[\A]$ shares with $\Z$, is that any element can be factored into a product of primes uniquely. In order words, for any $a\in\Z[\A]$ we can uniquely write
\[
a=uq_1^{e_1}q_2^{e_2}\cdots q_r^{e_r}
\]
where $q_i$ are prime elements of $\Z[\A]$ and $u$ is a unit of $\Z[\A]$. This fact is easy to show for $\Z[\A]$ via a Euclidean algorithm.

Recall that we can define prime elements and units in $\Z[\A]$ as follows.
\begin{definition}
Let $a,b$, and $q$ be elements of $\Z[\A]$. We call $q$ a prime element of $\Z[\A]$ if $q|ab$ (i.e. $ab=qc$ for some $c\in\Z[\A]$) implies
\[
q|a\qquad\text{or}\qquad q|b.
\]
\end{definition}
\begin{definition}
An element $u$ of $\Z[\A]$ is called a unit if we can find another element $v$ of $\Z[\A]$ satisfying
\[
uv=1.
\]
\end{definition}
A triple $(a,b,c)\in \Z[\A]^3$ is said to be written in the lowest term if the greatest common divisor $\gcd(a,b,c)$ of $a$, $b$ and $c$ is $1$. 

An element $a\in \Z[\A]$ can be written as $x+y\A$, and we define the conjugate of $a$ to be
\[
\bar{a}:=x+y\bar{\A}.
\]
The norm $N:\Z[\A]\to\Z$ is given by, for any $a=x+y\A\in\Z[\A]$,
\[
N(a):=a\bar{a}=(x+y\A)(x+y\bar{\A}).
\]
\begin{definition}
Let $(a,b,c)\in \Z[\A]^3$ be written in the lowest term. The height function is
\[
H(a,b,c)=\max\{|a|,|b|,|c|\}\max\{|\bar{a}|,|\bar{b}|,|\bar{c}|\}
\]
where $|\cdot|$ is the usual absolute value.
\end{definition}
Readers can refer to Bombieri and Gubler\cite{bombieri2007heights} for more detail about height functions. The height function is proportional to the algebraic complexity or number of bits needed to store a point. From this idea, we can expect that there are only finitely many points of which the complexity is bounded. The theorem for this finiteness property is called Northcott's theorem\cite{silverman_introduction_2006}. 

The idea of the $abc$ conjecture is that if the point $(a,b,c)$ is on the hyperplane $X+Y+Z=0$, i.e. $a+b+c=0$, and $(a,b,c)$ is written in the lowest term, then the complexity of this point can somehow be bounded by all prime factors of $a$, $b$ and $c$. We call the product of all prime factors of $a$, $b$ and $c$ the radical.
\begin{definition}
Let $(a,b,c)\in \Z[\A]^3$ be written in the lowest term. Let $q$ be a prime element of $\Z[\A]$. The radical of $(a,b,c)$, denoted by $\rad(a,b,c)$, is 
\[
\rad(a,b,c):=\prod_{q|abc}N(q).
\]
\end{definition}
With these definitions, we can state the $abc$ conjecture of Masser-Oesterl\'{e}-Szpiro.
\begin{conjecture}
Let $\varepsilon>0$, then there exists a constant $C_\varepsilon$ such that 
\[
H(a,b,c)\leq C_\varepsilon\rad_K(a,b,c)^{1+\varepsilon}
\]
for all $(a,b,c)\in\{(X,Y,Z)\in\Z[\A]^3\mid X+Y+Z=0\}$ with $(a,b,c)$ written in the lowest term.
\end{conjecture}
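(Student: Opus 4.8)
The plan begins with an honest assessment: the statement to be proved is the $abc$ conjecture for the number field $\Qd{5}$, a special case of the Masser--Oesterl\'{e}--Szpiro conjecture that remains \emph{open}. No elementary argument is available, and the title of this paper already signals that the inequality is to be \emph{assumed} rather than established. What I can offer, therefore, is not a complete proof but an outline of the strategies one would attempt together with the decisive obstruction that has so far resisted all of them.

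The natural model is the \emph{function field} analogue, the Mason--Stothers theorem, which \emph{is} provable by elementary means. In that setting $a$, $b$, $c$ are coprime polynomials over a field with $a+b+c=0$, and one bounds $\max(\deg a,\deg b,\deg c)$ by the number of distinct roots of $abc$ minus one. The engine of the proof is the Wronskian $W(a,b)=ab'-a'b$: since $a'+b'+c'=0$ one has $W(a,b)=W(b,c)=W(c,a)$, and a root of $a$, $b$, or $c$ of multiplicity $m$ forces a zero of this common Wronskian of order at least $m-1$. Comparing the degree of $W$ against the total multiplicity yields the bound at once. My first step would be to transport this derivation-based argument to $\Z[\A]$, seeking a derivation, or an arithmetic substitute for $d/dt$, that detects repeated prime factors and so controls the product of norms appearing in $\rad$.

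This is precisely where the \textbf{main obstacle} lies. The ring $\Z[\A]$ carries no canonical derivation, so there is no Wronskian to exploit; equivalently, the two archimedean places of $\Qd{5}$ entering the height $H$ and the non-archimedean contributions entering $\rad$ are not coordinated by any differential structure. The genuinely arithmetic approaches that survive this obstruction are all at least as deep as the conjecture itself. One may attach to each solution $(a,b,c)$ a Frey--Hellegouarch elliptic curve such as $y^2=x(x-a)(x+b)$ over $\Qd{5}$ and compare its minimal discriminant, divisible by $(abc)^2$, against its conductor, whose primes are those dividing $abc$; an effective form of \emph{Szpiro's conjecture} would then deliver the bound. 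Alternatively one may try to derive it from \emph{Vojta's conjecture} on heights of algebraic points. Each of these, however, merely trades the problem for another of equal or greater difficulty, and the only claimed route to an unconditional proof, Mochizuki's inter-universal Teichm\"{u}ller theory, is not accepted by the wider community.

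In summary, the honest ``proof plan'' is the recognition that this inequality is one that cannot presently be proved: the Wronskian method that settles the function field case has no arithmetic counterpart over $\Z[\A]$, and every known reduction preserves rather than dissolves the difficulty. It is for exactly this reason that the remainder of the paper takes the conjecture as a hypothesis and extracts from it consequences for Wall's conjecture, rather than attempting to establish it outright.
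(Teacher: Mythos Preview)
Your assessment is correct and matches the paper exactly: the statement is labeled as a \emph{Conjecture} in the paper, not a theorem, and no proof is offered or attempted. The paper explicitly \emph{assumes} this form of the $abc$ conjecture over $\Z[\A]$ in order to derive the main theorem on Wall's conjecture, so there is nothing further to compare.
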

\section{Main theorem}
\begin{theorem}
There are infinitely many primes $p$ satisfying Wall's conjecture assuming the $abc$ conjecture.
\end{theorem}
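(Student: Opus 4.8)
The plan is to deduce from the $abc$ conjecture that the squarefree part $S_n:=\prod_{p\,\|\,F_n}p$ of the $n$-th Fibonacci number tends to infinity with $n$, and then to invoke the Main Lemma. Granting $S_n\to\infty$: by Lemma~\ref{mainlemma}, every prime $p\neq 2,5$ with $p\,\|\,F_n$ satisfies $\pi(p)\neq\pi(p^2)$, i.e.\ satisfies Wall's conjecture. If only finitely many primes satisfied Wall's conjecture, then --- recalling that $2$ and $5$ do as well --- only finitely many primes could ever divide some $S_n$; since each $S_n$ is squarefree, this would force the sequence $(S_n)$ to be bounded, contradicting $S_n\to\infty$. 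Hence infinitely many primes satisfy Wall's conjecture.

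To prove $S_n\to\infty$ I would apply the $abc$ conjecture in $\Z[\A]$ to the vanishing sum
\[
\A^n+\bigl(-\bar{\A}^n\bigr)+\bigl(-\sqrt5\,F_n\bigr)=0,
\]
which is nothing but $\A^n-\bar{\A}^n=\sqrt5\,F_n$ rearranged. Since $\A\bar{\A}=-1$, both $\A^n$ and $\bar{\A}^n$ are units, so this triple is automatically written in lowest terms and its product equals $\pm\sqrt5\,F_n$. Using $\A^n=F_n\A+F_{n-1}$ one checks that $\overline{\A^n}=\bar{\A}^n$ and $\overline{\bar{\A}^n}=\A^n$; since $|\A|>1>|\bar{\A}|$ and $F_n$ is comparable to $\A^n/\sqrt5$, each of the two archimedean factors of $H\bigl(\A^n,-\bar{\A}^n,-\sqrt5 F_n\bigr)$ is comparable to $\A^n$, so $H\asymp\A^{2n}\asymp F_n^2$. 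For the radical, the prime elements of $\Z[\A]$ dividing the product $\pm\sqrt5\,F_n$ are the ramified prime above $5$ together with those dividing $F_n$; for a rational prime $p\neq 5$ dividing $F_n$, the prime(s) of $\Z[\A]$ above $p$ contribute exactly $p^2$ to $\prod_q N(q)$ whether $p$ is inert or splits --- in the split case both conjugate primes divide $F_n$ because $F_n\in\Z$. Consequently $\rad\bigl(\A^n,-\bar{\A}^n,-\sqrt5 F_n\bigr)\asymp \rad_{\Z}(F_n)^2$, writing $\rad_{\Z}(F_n):=\prod_{p\mid F_n}p$ for the ordinary radical.

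Feeding these two estimates into the $abc$ conjecture gives, for each $\varepsilon>0$, a bound $F_n^2\ll_\varepsilon\rad_{\Z}(F_n)^{2(1+\varepsilon)}$, i.e.\ $\rad_{\Z}(F_n)^2\gg_\varepsilon F_n^{2/(1+\varepsilon)}$. To pass from $\rad_{\Z}(F_n)$ to $S_n$ I would use the elementary powerful-part inequality: write $F_n=S_nP_n$ with $P_n$ powerful (every prime divides it to the power at least $2$) and coprime to $S_n$; then $\rad_{\Z}(P_n)\leq P_n^{1/2}$, hence
\[
\rad_{\Z}(F_n)^2=S_n^2\,\rad_{\Z}(P_n)^2\leq S_n^2P_n=S_n F_n,
\]
so $S_n\geq\rad_{\Z}(F_n)^2/F_n\gg_\varepsilon F_n^{(1-\varepsilon)/(1+\varepsilon)}$. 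Fixing any $\varepsilon<1$ makes the exponent positive, and since $F_n\to\infty$ we obtain $S_n\to\infty$, which is what remained to be shown.

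I expect there to be no deep obstacle here: the substantive input is the $abc$ conjecture, and the only delicate points are the two-sided estimates --- that $H\bigl(\A^n,-\bar{\A}^n,-\sqrt5 F_n\bigr)$ is genuinely $\asymp F_n^2$, not merely bounded on one side, and the bookkeeping that each rational prime $p\mid F_n$ contributes precisely $p^2$ to the radical in $\Z[\A]$ across the split, inert, and ramified cases (together with the harmless factor from $\sqrt5$). Everything else is formal, using only the identities $\A^n=F_n\A+F_{n-1}$ and $\A^n-\bar{\A}^n=\sqrt5\,F_n$, the unit relation $\A\bar{\A}=-1$, and the fact that a powerful integer has radical at most its square root.
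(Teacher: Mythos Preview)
Your argument is correct and follows essentially the same route as the paper: apply the $abc$ conjecture over $\Z[\A]$ to the coprime triple $(\sqrt5\,F_n,-\A^n,\bar{\A}^n)$, bound the height below by $\asymp F_n^2$ and the radical above by $\asymp \rad_{\Z}(F_n)^2\le S_nF_n$ via the powerful-part inequality, and conclude that the squarefree part $S_n$ is unbounded, which together with the Main Lemma gives infinitely many non-Fibonacci-Wieferich primes. The only cosmetic difference is that the paper runs the final step as a proof by contradiction (assume $U_n$ bounded, force $V_n^{1-\varepsilon}$ bounded), whereas you extract the explicit lower bound $S_n\gg_\varepsilon F_n^{(1-\varepsilon)/(1+\varepsilon)}$.
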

\begin{proof}
Let $U_n:=\prod_{p\| F_n}p$ be the squarefree part of $F_n$ and rewrite $F_n=U_n V_n$. For the sake of contradiction, we suppose $U_n$ is bounded, i.e. there exists a constant $B$ such that $U_n<B$ for all $n$. 

We start our argument by considering the closed form of the Fibonacci numbers, and it follows that 
\[
\sqrt{5}F_n-\A^n+\bar{\A}^n=0.
\]
Since $\A$ is a unit, the point $(\sqrt{5}F_n,-\A^n,\bar{\A}^n)$ is written in the lowest term. Thus, the $abc$ conjecture implies that, for any $\varepsilon$, there exists a constant $C_\varepsilon$ such that
\[
H(\sqrt{5}F_n,-\A^n,\bar{\A}^n)\leq C_\varepsilon \rad(\sqrt{5}F_n,-\A^n,\bar{\A}^n)^{1+\varepsilon}.
\]
By definition, we have
\begin{align}
H(\sqrt{5}F_n,-\A^n,\bar{\A}^n)&=\max\{|\sqrt{5}F_n|,|-\A^n|,|\bar{\A}^n|\}\max\{|-\sqrt{5}F_n|,|-\bar{\A}^n|,|\A^n|\}\\
&\geq (\sqrt{5}F_n)^2=5F_n^2=5U_n^2V_n^2.
\end{align}
Now, we would like to estimate the radical. Since $\A$ and $\bar{\A}$ are units, all prime factors are coming from $\sqrt{5}F_n$. Therefore, we have
\[
\rad(\sqrt{5}F_n,-\A^n,\bar{\A}^n)=\prod_{p|\sqrt{5}F_n}N(p)\leq 5U_n^2V_n.
\]
It follows that
\[
5U_n^2V_n^2\leq C_\varepsilon(5U_n^2V_n)^{1+\varepsilon},
\]
and so
\[
V_n^2\leq C_{\varepsilon}5^{\varepsilon}U_n^\varepsilon V_n^{1+\varepsilon}.
\]
Since $U_n$ is bounded and $F_n$ tends to infinity, $V_n$ must also tend to infinity as $n$ tends to infinity. Thus, it is clearly a contradiction once we choose $\varepsilon<1$. By Lemma~\ref{mainlemma}, we conclude that there are infinitely many primes satisfying Wall's conjecture.
\end{proof}
Our argument can be easily generalized to a Fibonacci-like sequence. We merely need to change the height functions and radical into generalized forms. 

%Our story ends here, but the story for Wall's conjecture still has a long way to go. 

\section{A heuristic approach to Wall's conjecture}
Instead of providing a definitive statement as to the truth of Wall's Conjecture, we can give a heuristic that the conjecture is in fact false. There are two other heuristics that are in mild conflict, and our heuristic aims to reconcile the two. The first, from Crandall, Dilcher, and Pomerance, uses the fact that $p$ is a Fibonacci-Wieferich prime if and only if $F_{p-\left(\frac{p}{5}\right)}\equiv 0\mod{p^2}$ where $\left(\frac{p}{5}\right)$ is the Legendre symbol. Since it's always true that $F_{p-\left(\frac{p}{5}\right)}\equiv 0\mod{p}$, we can write $F_{p-\left(\frac{p}{5}\right)}\equiv kp\mod{p^2}$. Assuming $k$ is randomly distributed between 0 and $p-1$, there should be a $1/p$ chance that $p$ is a Fibonacci-Wieferich prime. Since the sum of $1/p$ over all primes diverges to infinity, this heuristic predicts infinitely many such primes\cite{CDP1997}. 

The second heuristic is due to Kla\v{s}ka and breaks the problem into two cases where either $p\equiv\pm1\mod5$ or $p\equiv\pm2\mod5$. He argues in the first case there is a $1/p^2$ chance of $p$ being a Fibonacci-Wieferich prime, and a $1/p$ chance in the second. While this still results in infinitely many Fibonacci-Wieferich primes, it predicts significantly less than the first heuristic\cite{Jiri2007}.

Our heuristic takes an approach similar to Kla\v{s}ka's by splitting into two cases, but by considering some extra information from the norm we argue that both should result in a $1/p$ chance. In our case, let $K=\Q(\A)$ and $\OK=\Z[\A]$ be the ring of integers of $K$ where $\A=(1+\sqrt5)/2$.
\begin{proposition}
A prime $p\neq2,5$ is a Fibonacci-Wieferich (or Wall-Sun-Sun) prime if and only if $\alpha^{p^2-1}\equiv1\mod{p^2}$ where the modulus is considered over $\OK$, i.e. the image of $\alpha^{p^2-1}$ under the canonical mapping $\varphi:\OK\to\OK/p^2\OK$ is the identity.
\end{proposition}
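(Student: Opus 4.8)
The plan is to translate both sides into statements about the multiplicative orders of $\alpha$ and of $\alpha\bar\alpha^{-1}$ in $(\OK/p^2\OK)^\times$, and then pass between them using the norm relation $\alpha\bar\alpha=-1$. By the criterion recalled in Section~5, $p$ is a Fibonacci--Wieferich prime precisely when $p^2\mid F_{p-\left(\frac{p}{5}\right)}$, and Lemma~\ref{lem:1} with modulus $p^2$ rewrites this as $l(p^2)\mid p-\left(\frac{p}{5}\right)$. The first step is to observe that this is equivalent to $l(p^2)\mid p^2-1$: one direction holds because $p-\left(\frac{p}{5}\right)$ divides $p^2-1=\big(p-\left(\frac{p}{5}\right)\big)\big(p+\left(\frac{p}{5}\right)\big)$, and for the other I would combine the dichotomy $l(p^2)\in\{l(p),\,p\,l(p)\}$ from the proof of Lemma~\ref{mainlemma} with the standard fact $l(p)\mid p-\left(\frac{p}{5}\right)$ (in the split case this is $l(p)\mid\pi(p)\mid p-1$ via Theorem~\ref{thm:wall}; in the inert case Frobenius gives $\alpha^{p}\e\bar\alpha\mod p$, so $\alpha^{p+1}\e\alpha\bar\alpha=-1\mod p$ and hence $(\alpha\bar\alpha^{-1})^{p+1}\e1\mod p$). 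Since $\gcd(p,p^2-1)=1$, the branch $l(p^2)=p\,l(p)$ is incompatible with $l(p^2)\mid p^2-1$, forcing $l(p^2)=l(p)\mid p-\left(\frac{p}{5}\right)$. By Lemma~\ref{lem:2}, $l(p^2)$ is the order of $\alpha\bar\alpha^{-1}$ in $(\OK/p^2\OK)^\times$, so at this stage $p$ is Fibonacci--Wieferich if and only if $(\alpha\bar\alpha^{-1})^{p^2-1}\e1\mod{p^2}$.

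It then remains to establish
\[
(\alpha\bar\alpha^{-1})^{p^2-1}\e1\mod{p^2}\qquad\Longleftrightarrow\qquad\alpha^{p^2-1}\e1\mod{p^2}.
\]
One direction is immediate, since conjugation is a ring automorphism of $\OK$ commuting with $\varphi$, so $\alpha^{p^2-1}\e1$ forces $\bar\alpha^{p^2-1}\e1$ and hence $(\alpha\bar\alpha^{-1})^{p^2-1}\e1$. For the converse, I would first note that $\alpha^{p^2-1}\e1\mod p$ holds unconditionally, because $(\OK/p\OK)^\times$ is $\mathbb{F}_{p^2}^{\times}$ of order $p^2-1$ when $p$ is inert and $\mathbb{F}_p^{\times}\times\mathbb{F}_p^{\times}$ of exponent $p-1\mid p^2-1$ when $p$ splits. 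Hence $\alpha^{p^2-1}=1+p\g$ for some $\g\in\OK$, well-defined modulo $p$. Taking norms, $N(\alpha^{p^2-1})=N(\alpha)^{p^2-1}=(-1)^{p^2-1}=1$ since $p$ is odd, while also $N(\alpha^{p^2-1})\e(1+p\g)(1+p\bar\g)\mod{p^2}$; so $(1+p\g)(1+p\bar\g)\e1\mod{p^2}$, giving $\g+\bar\g\e0\mod p$. On the other hand $(1+p\bar\g)^{-1}\e1-p\bar\g\mod{p^2}$, so $(\alpha\bar\alpha^{-1})^{p^2-1}\e(1+p\g)(1+p\bar\g)^{-1}\e1+p(\g-\bar\g)\mod{p^2}$, and the hypothesis forces $\g-\bar\g\e0\mod p$. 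Adding the two congruences yields $2\g\e0\mod p$, and as $p\neq2$ we conclude $\g\e0\mod p$, that is $\alpha^{p^2-1}\e1\mod{p^2}$.

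The step I expect to be the genuine crux is the converse direction just treated. The naive manipulation --- rewrite $(\alpha\bar\alpha^{-1})^{p^2-1}\e1$ as $\alpha^{p^2-1}\e\bar\alpha^{p^2-1}$, multiply by $\alpha^{p^2-1}$, and use $N(\alpha)=-1$ --- only gives $\alpha^{2(p^2-1)}\e1\mod{p^2}$, so a priori $\alpha^{p^2-1}$ could be a nontrivial square root of unity ($-1$, or a ``mixed'' unit when $p$ splits). Pinning it down to $1$ is precisely where the norm identity $\g+\bar\g\e0\mod p$ together with $p\neq2$ come in, and pleasantly this argument is uniform across the split and inert cases. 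By contrast, the divisibility bookkeeping in the first paragraph is routine given Lemmas~\ref{lem:1}--\ref{lem:2} and the proof of Lemma~\ref{mainlemma}.
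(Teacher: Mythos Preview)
Your proof is correct, but it follows a genuinely different path from the paper's.

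The paper works directly with the individual orders $\ord_{p^e}(\alpha)$ and $\ord_{p^e}(\bar\alpha)$, invoking the external identity $\pi(p^e)=\lcm\{\ord_{p^e}(\alpha),\ord_{p^e}(\bar\alpha)\}$ from \cite{KJ2008}. For the forward direction it shows $\pi(p)\mid p^2-1$ by the same split/inert case analysis you use, and then $\pi(p^2)=\pi(p)$ immediately gives $\ord_{p^2}(\alpha)\mid\pi(p^2)\mid p^2-1$. For the converse it argues that $\alpha^{p^2-1}\e1\pmod{p^2}$ forces $p\nmid\ord_{p^2}(\alpha)$, while $\pi(p^2)=p\pi(p)$ together with the $\lcm$ description would force $p\mid\ord_{p^2}(\alpha)$ (or $\ord_{p^2}(\bar\alpha)$, which is the same by conjugation). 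So the paper never needs to pass between $\alpha$ and $\alpha\bar\alpha^{-1}$.

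Your route instead stays entirely inside the paper's own Lemmas~\ref{lem:1}--\ref{lem:2} and the $F_{p-(p/5)}$ criterion, reducing everything to $l(p^2)=\ord_{p^2}(\alpha\bar\alpha^{-1})$, and then handles the genuinely new step --- upgrading $(\alpha\bar\alpha^{-1})^{p^2-1}\e1$ to $\alpha^{p^2-1}\e1$ --- with the norm/trace computation $\gamma+\bar\gamma\e0$, $\gamma-\bar\gamma\e0\pmod p$. This buys self-containment (no appeal to \cite{KJ2008}) and, pleasantly, your norm calculation is exactly the same one the paper performs in the heuristic paragraph immediately following the Proposition; in effect you have promoted that heuristic computation to a proof ingredient. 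The trade-off is length: the paper's argument is shorter once the $\lcm$ formula is granted. One small remark: the dichotomy $l(p^2)\in\{l(p),\,p\,l(p)\}$ is not quite stated in the proof of Lemma~\ref{mainlemma} (which only treats the case $k\not\e0$), but it follows from the same observation there that every $1+kp$ has order dividing $p$ in $(\OK/p^2\OK)^\times$, so $l(p^2)\mid p\,l(p)$.
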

\begin{proof}
\iffalse
Fix a prime $p\neq2,5$ and let $e\in\Z^+$. Then $\pi(p^e)$ is smallest integer $\pi$ such that $F_\pi\equiv F_0\equiv0\mod{p^e}$ and $F_{\pi+1}\equiv F_1\equiv1\mod{p^e}$. Since $F_n=\sqrt{5}^{-1}(\alpha^n-\overline{\alpha}^n)$, this is equivalent to
\[
\begin{pmatrix}\sqrt5^{-1} & -\sqrt5^{-1}\\\sqrt5^{-1}\alpha & -\sqrt5^{-1}\overline{\alpha}\end{pmatrix}
\begin{pmatrix}\alpha^\pi\\\overline{\alpha}^\pi\end{pmatrix}\equiv
\begin{pmatrix}\sqrt5^{-1} & -\sqrt5^{-1}\\\sqrt5^{-1}\alpha & -\sqrt5^{-1}\overline{\alpha}\end{pmatrix}
\begin{pmatrix}1\\1\end{pmatrix}\mod{p^e}.
\]
Since the determinant of the $2\times2$ matrix is nonzero, we may cancel it from both sides to obtain
\[
\begin{pmatrix}\alpha^\pi\\\overline{\alpha}^\pi\end{pmatrix}\equiv\begin{pmatrix}1\\1\end{pmatrix}\mod{p^e}
\]
where we now take the modulus inside the ring $\OK$. Observe that this vector equation gives the characterization $\pi(p^e)=\lcm\{\ord_{p^e}(\A),\ord_{p^e}(\overline{\A})\}$ where $\ord_{p^e}(\A)$ denotes the order of $\A$ modulo $p^e$.
\fi
Recall a prime $p$ is defined to be a Fibonacci-Wieferich prime if $\pi(p)=\pi(p^2)$. It is known that $\pi(p^e)=\lcm\{\ord_{p^e}(\A),\ord_{p^e}(\overline{\A})\}$ where $\ord_{p^e}(\A)$ denotes the order of $\A$ modulo $p^e$\cite{KJ2008}. Now suppose $\pi(p)=\pi(p^2)$. Consider what happens to the ideal $(p)$ in $\OK$. As $p\neq2,5$, we have two cases. In the first case, $(p)$ factors as $(q_1)(q_2)$, so by the Chinese remainder theorem $(\OK/p\OK)^\times\cong (\OK/q_1\OK)^\times\times(\OK/q_2\OK)^\times$. Incidentally, this case is equivalent to the case that $p\equiv\pm1\mod5$. In particular, the order of $(\OK/p\OK)^\times$ is $(p-1)^2$, and we can see that any element must have order dividing $p-1$. Thus, $\ord_{p}(\A)$ divides $p-1$. In the second case, $(p)$ remains a prime ideal in $\OK$ in which case the order of $(\OK/p\OK)^\times$ is $p^2-1$. An immediate result is that $\ord_{p}(\A)$ divides $p^2-1$. In either case, $\ord_{p}(\A)$ divides $p^2-1$. The same argument also shows that $\ord_{p}(\overline{\A})$ divides $p^2-1$. Using the above characterization of $\pi(p^e)$ we conclude that $\pi(p)|p^2-1$. Thus $\pi(p^2)|p^2-1$ and so $\alpha^{p^2-1}\equiv1\mod{p^2}$.

Conversely, if $\alpha^{p^2-1}\equiv1\mod{p^2}$, then $\ord_{p^2}(\A)$ divides $p^2-1$. Since $p\nmid p^2-1$, it must be that $p\nmid\ord_{p^2}(\A)$. By our initial theorems from Wall, if $\pi(p^2)\neq\pi(p)$, then $\pi(p^2)=p\pi(p)$. If this is the case, then we see that $\A^{p\pi(p)}\equiv1\mod{p^2}$ and $\overline{\A}^{p\pi(p)}\equiv1\mod{p^2}$. Since either $\A^{\pi(p)}\not\equiv1\mod{p^2}$ or $\overline{\A}^{\pi(p)}\not\equiv1\mod{p^2}$ by assumption and $p$ is prime, it must be that $p$ divides $\ord_{p^2}(\A)$. This is a contradiction, so $\pi(p^2)=\pi(p)$.
\end{proof}
With our new definition, let $p$ be any prime not equal to 2 or 5. From an argument presented above, we know that $\A^{p^2-1}\equiv1\mod p$. So when considered modulo $p^2$, we see that $\A^{p^2-1}\equiv1+k_pp\mod{p^2}$ where $k_p=a+b\frac{1+\sqrt5}{2}$ and $0\le a,b\le p-1$. Although it may seem this implies a $1/p^2$ chance of $p$ being a Fibonacci-Wieferich prime, we in fact have some extra information. When considering the norm, $N:\Z[\A]\to\Z$, we know that $N(\A^{p^2-1})=1$ since $\A$ is a unit and $p^2-1$ is even. Thus $N(1+k_pp)\equiv1\mod{p^2}$. Computing the norm directly, we obtain
$$\begin{aligned}
N(1+k_pp)&=\left[1+\left(a+b\frac{1+\sqrt5}{2}\right)p\right]\left[1+\left(a+b\frac{1-\sqrt5}{2}\right)p\right]\\
&=1+2ap+bp+a^2p^2+abp^2-b^2p^2.
\end{aligned}$$
So we see that $N(1+k_pp)\equiv1\mod{p^2}$ if and only if $2a+b\equiv0\mod{p}$. As there are $p$ choices for $a$, this results in a $1/p$ chance that $p$ is a Fibonacci-Wieferich prime. 

In addition to this heuristic, there is statistical evidence that there are approximately this many Fibonacci-Wieferich primes. Using the heuristic from Crandall, Dilcher, and Pomerance, both Elsenhans and Jahnel\cite{AJ2010}, and McIntosh and Roettger\cite{RE2007} computed the expected number of ``near misses" in a large interval and compared it to the actual number of near misses. A near miss was defined by first writing $F_{p-\left(\frac{p}{5}\right)}\equiv kp\mod{p^2}$, and if $|k|$ was less than a specified small number, say 100, it was called a near miss. In both papers the expected number and actual number of near misses are nearly the same, and it was not dependent on which interval was used.

However, although all the heuristics suggest infinitely many, no Fibonacci-Wieferich primes have been found in quite a large interval. In his initial paper, Wall gave data for $p<2000$ to support his conjecture, and this has been extended by \cite{PrimeGrid} which shows that there is no Fibonacci-Wieferich prime up to $1.2\times 10^{17}$.

% \begin{acknowledgment}{Acknowledgment.}
% We express our sincere thanks to Thomas Tucker 
% for his help and suggestions, and we are also thankful to the editors and reviewers for their valuable opinions. 
% \end{acknowledgment}

\bibliography{ABCFIB}
\bibliographystyle{amsra}
\begin{comment}

\begin{biog}
\item[George Grell] (george.grell@rochester.edu) was born and raised in Boulder, Colorado. He earned his bachelor's degree in mathematics from the University of Colorado and dabbled in representation theory there. Afterwards, he went to the University of Rochester and is still working on earning a PhD. He is currently working under Thomas Tucker and his primary interests include algebraic and arithmetic number theory.
\begin{affil}
UR Mathematics, 915 Hylan Building, University of Rochester, RC Box 270138  Rochester, NY 14627\\
george.grell@rochester.edu
\end{affil}

\item[Wayne Peng] (jpeng4@ur.rochester.edu) started at National Chung-Cheng University and graduated in 2009. Before becoming a graduate student at the University of Rochester, he was a research assistant guided by Julie T.-Y. Wang at Academia Sinica of Taiwan. Wayne Peng is working towards his PhD at the University of Rochester, and his advisor is Thomas Tucker. The author's research interests are arithmetic number theory, arithmetic geometry and equidistribution.
\begin{affil}
UR Mathematics, 915 Hylan Building, University of Rochester, RC Box 270138  Rochester, NY 14627\\
jpeng4@ur.rochester.edu
\end{affil}
\end{biog}
\end{comment}
\vfill\eject
\end{document}